\newtheorem{theorem}{Theorem}[section]
\newtheorem{corollary}[theorem]{Corollary}
\newtheorem{lemma}[theorem]{Lemma}
\theoremstyle{definition}
\newtheorem{definition}[theorem]{Definition}
\newtheorem{remark}[theorem]{Remark}
\theoremstyle{remark}
\let\c@equation\c@theorem
\numberwithin{equation}{section}
\title{ CONTINUITY OF SUBHARMONIC FUNCTIONS}
\author{MANSOUR KALANTAR}
\date{}
\begin{document}

\begin{abstract}

We prove that the set of points where a subharmonic function fails to be continuous is polar.
\end{abstract}

\maketitle

\section{Introduction}

Let $X$ be a metric space and $f$ a real-valued function on $X$. A classic theorem of topology, known as the semicontinuity lemma, states that if $f$ is upper semi-continuous, then the set of the points where $f$ is discontinuous is included in a countable union of closed nowhere dense sets (see for example K. Kuratowski \cite{Kuratowski} and  B. Santiago \cite{Santiago}). 

A natural question arises: Suppose that the function is defined on an open subset of an Euclidean space;  does its set of discontinuity gets  smaller, if  furthermore the function is subharmonic? An anologue of Lusin's theorem in potential theory states that for all arbitrary $\varepsilon>0$ there exists an open subset of capacity less than $\varepsilon$ on the complementary of which $u$ is continuous (See for example D. Armitage and S. Gardiner\cite[Theorem 5.5.8]{ArmGar}). We also know, by a theorem of Baire, that since subharmonic functions are pointwise limit of continuous functions, their discontinuity set is of the first Baire category (see O. Knill \cite{Knill} and the references therein). 

In this paper we first prove a basic potential theoretic result (Lemma 3.1)  and then as its first application show that the set of points where a subharmonic function fails to be continuous is  polar. Then we show that conversely,  given any polar set, there exists a suharmonic function that is discontinuous at each point of this set.  As a second application of the lemma we give a sort of "converse" for the famous extend maximum (or Phragm\`{e}n-Lindel\"{o}f) principle.

\section{Definitions and Preliminaries} 

\textit{In all this paper $\Omega$ is a bounded open subset of $\mathbb{R}^{N}$ with $N\geq2$} unless otherwise is stated. 

 We note $ \overline{E} $ and $ \partial E $ the closure and boundary of a set $ E $ in $\mathbb{R}^{N}$, respectively. A function $u:\Omega\rightarrow[-\infty,+\infty)$ is called upper semi-continuous at $x\in\Omega$ if for all $\varepsilon>0$ we can find an open neighborhood $V$ of $x$ such that 
$$u(\zeta)<u(x)+\varepsilon$$
for all $\zeta\in V$. The function $u$ is called lower semi-continuous, if $-u$ is upper semi-continuous. A function is continuous if it is lower and upper semi-continuous. 
\begin{definition}
	The set of discontinuity of a function $u$ defined on $\Omega$ is the set of points in $\Omega$ at which $u$ fails to be continuous.
\end{definition}

 For reader's convenience we summarize below the special cases of some results from the classical potential theory that we will be using. We recall definitions and results for our need in this paper,  for  the most general form the reader should  refer to the given reference.

 We recall that an upper semi-continuous function $u:\Omega\rightarrow [-\infty,+\infty)$ is called subharmonic, if $u\not\equiv-\infty$ and  for all ball $B(x,\rho)$ relatively compact in $\Omega$, 
 $$u(x)\leq \frac{1}{\sigma_{N}\rho^{N-1}}\int_{\partial B(x,\rho)}u(\zeta)d\sigma.$$

Let $\mu_{x}^{\Omega}$ be the harmonic measure at $x\in \Omega$. 
 A set $ E\subset\partial \Omega $ is called negligible for $ \Omega $, if 
 $$ \mu_{x}^{\Omega}(E)=0 $$
 for all $ x\in \Omega $. 
 Negligible sets can by characterized by the notion of thinness. A set $ E $ is said to be thin at a point $ \zeta $ if $ \zeta $ is not a fine (with respect to the fine topology) limit point of $ E. $ The following theorem, used in the proof of Lemma 3.1, gives the relation between thinness and negligibility.
 
 \begin{theorem}\label{2,2}
 	Let $ \zeta $ be a limit point of a set $ E $. The set $ E $ is thin at $ \zeta $ if and only if there exists a subharmonic function $ v $ on a neighborhood of $ \zeta $ such that 
 	$$ \limsup_{\substack{x\rightarrow\zeta\\(x\in E)}}v(x)<v(\zeta). $$
 \end{theorem}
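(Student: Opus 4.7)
The plan is to prove the two implications separately. The implication ``existence of $v$ implies thinness'' is essentially formal, while ``thinness implies existence of $v$'' requires the machinery of reduced functions (balayage), followed by a perturbation to absorb a polar exceptional set.

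For the easy direction, I would use the fact that the fine topology is by definition the coarsest topology on $\mathbb{R}^N$ making every subharmonic function continuous, so that for any subharmonic $v$ and any $c\in\mathbb{R}$ the superlevel set $\{v>c\}$ is finely open. Given $v$ with $\limsup_{E\ni x\to\zeta}v(x)<v(\zeta)$, I would pick $c$ strictly between these two numbers and then choose a Euclidean ball $B(\zeta,r)$ on which $v(x)\le c$ for every $x\in E\setminus\{\zeta\}$. The intersection $\{v>c\}\cap B(\zeta,r)$ is then a fine neighborhood of $\zeta$ that meets $E$ at most at $\zeta$, so $\zeta$ is not a fine limit point of $E$.

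For the hard direction I would first replace $E$ by $E\setminus\{\zeta\}$ (the thinness hypothesis and the target $\limsup$ are insensitive to this) and work in a small ball $B=B(\zeta,\rho)$. Let $F=\overline{E\cap B}$ and form the regularized reduced function $R=\widehat{R}_{1}^{F}$, defined as the lower-semicontinuous regularization of $\inf\{u:u\text{ superharmonic on a neighborhood of }\overline{B},\ u\ge 1\text{ on }F\}$. Classical results of Brelot and Cartan give that $R$ is superharmonic, $0\le R\le 1$, and $R=1$ quasi-everywhere on $F$; moreover, thinness of $F$ at the point $\zeta\notin F$ is characterized by the strict inequality $R(\zeta)<1$. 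Setting $v_{0}=-R$ produces a subharmonic function with $v_{0}(\zeta)>-1$ and $v_{0}\equiv -1$ on $F$ off a polar exceptional set $P\subset F$.

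The final technical step is to absorb $P$. Since $P$ is polar and $\zeta\notin P$, a standard theorem (attributed to Evans/Brelot) provides a subharmonic function $w$ with $w\equiv -\infty$ on $P$ and $w(\zeta)\in\mathbb{R}$. Then $v:=v_{0}+w$ is subharmonic, $v(\zeta)=-R(\zeta)+w(\zeta)>-1+w(\zeta)$, while for $x\in F$ we have $v(x)=-\infty$ on $P$ and $v(x)=-1+w(x)$ off $P$; by upper semi-continuity of $w$ at $\zeta$,
\[
\limsup_{\substack{x\to\zeta\\ x\in E}}v(x)\le -1+w(\zeta)<v(\zeta).
\]
The main obstacle in this plan is precisely the need for this perturbation: the balayage $R$ yields only a quasi-everywhere equality on $F$, and the polar exceptional set must be swept aside by a Evans-type potential in order to upgrade the quasi-everywhere statement into the pointwise strict inequality demanded by the theorem. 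Everything else reduces to Cartan's characterization of thinness via $\widehat{R}_{1}^{F}$.
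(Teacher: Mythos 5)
The paper itself offers no proof of this statement; it simply cites Armitage--Gardiner, Theorem 7.2.3. So your proposal has to be judged on its own. Your easy direction is correct and is the standard argument: fine continuity of $v$ makes $\{v>c\}$ finely open, and intersecting with a small Euclidean ball on which $v\le c$ along $E\setminus\{\zeta\}$ produces a fine neighborhood of $\zeta$ missing $E\setminus\{\zeta\}$.

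The hard direction, however, has a genuine gap at the very first step: you set $F=\overline{E\cap B}$ and then invoke the characterization of thinness of $F$ at ``the point $\zeta\notin F$.'' But $\zeta$ is by hypothesis a limit point of $E$, so $\zeta\in\overline{E\cap B}=F$ for every ball $B$ centered at $\zeta$ (removing $\zeta$ from $E$ beforehand does not change the closure), and the premise $\zeta\notin F$ is simply false. Worse, passing to the closure destroys the hypothesis you need: thinness of $E$ at $\zeta$ does not imply thinness of $\overline{E}$ at $\zeta$. For instance, if $E=\mathbb{Q}^{N}\cap B(\zeta,1)$ then $E$ is polar, hence thin at $\zeta$, while $\overline{E\cap B}$ is a closed ball, which is non-thin at $\zeta$ and has $\widehat{R}_{1}^{\overline{E\cap B}}\equiv 1$ near $\zeta$; your construction then yields $v_{0}(\zeta)=-1$ and no strict inequality, even though the theorem's conclusion does hold for this $E$ (via an Evans potential that is $-\infty$ on $E\setminus\{\zeta\}$). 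The repair is to drop the closure entirely: reduced functions $\widehat{R}_{1}^{A}$ are defined for arbitrary sets $A$, the identity $\widehat{R}_{1}^{A}=1$ quasi-everywhere on $A$ holds in that generality, and Cartan's criterion characterizes thinness of $A=(E\setminus\{\zeta\})\cap B$ at $\zeta\notin A$ by $\widehat{R}_{1}^{A}(\zeta)<1$. With $A$ in place of $F$, the rest of your argument --- $v_{0}=-\widehat{R}_{1}^{A}$ plus an Evans-type subharmonic $w$ with $w=-\infty$ on the polar exceptional set $P\subset A$ and $w(\zeta)>-\infty$, using upper semicontinuity of $w$ at $\zeta$ to control the $\limsup$ --- goes through as written.
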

 See \cite[ Theorem 7.2.3]{ArmGar}.

 \begin{theorem}\label{th2.3}
	\begin{itemize}	
 		\item[(i)]
 		The set $ \lbrace \zeta\in \partial \Omega: \Omega \text{ is thin at }\zeta \rbrace$ is negligible for $ \Omega $.
 		\item[(ii)]If $ E $ is a relatively open subset of $ \partial\Omega $ which is negligible, then  the set $ E $ is polar.
 	\end{itemize}
 \end{theorem}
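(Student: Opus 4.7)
\textbf{Proof plan for Theorem \ref{th2.3}.}

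\textbf{Part (i).} Set $T=\{\zeta\in\partial\Omega:\Omega\text{ is thin at }\zeta\}$; the goal is to show $\mu_{x}^{\Omega}(T)=0$ for every $x\in\Omega$. The plan is to decompose $T$ into countably many well-parametrized pieces and to handle each piece by a barrier argument. By Theorem \ref{2,2}, each $\zeta\in T$ admits a subharmonic function $v$ on some ball $B(\zeta,\rho)$ with $\limsup_{x\to\zeta,\,x\in\Omega}v(x)<v(\zeta)$; after an affine rescaling I may assume $v(\zeta)\geq 1$ and $\limsup\leq 0$. I would then write $T=\bigcup_{n,k,m}T_{n,k,m}$, where $T_{n,k,m}$ consists of those $\zeta$ admitting such a $v$ defined on $B(\zeta,1/n)$, uniformly bounded above by $k$, with the inequality $\limsup\leq 0$ holding on the whole of $\Omega\cap B(\zeta,1/n)$. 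Fixing $n,k,m$ and a compact set $K\subset T_{n,k,m}$, I would apply the Perron--Wiener--Brelot characterization of harmonic measure: on a neighborhood of each point of $K$ the scaled barriers force the upper PWB envelope of $\mathbf{1}_{K}$ to have $\limsup\leq 0$ at every point of $K$, while it has $\limsup\leq 0$ at the remaining boundary points by definition, so the envelope vanishes on $\Omega$. This yields $\mu_{x}^{\Omega}(K)=0$, and inner regularity of the harmonic measure together with countable additivity gives $\mu_{x}^{\Omega}(T)=0$.

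\textbf{Part (ii).} For $E\subset\partial\Omega$ relatively open and negligible, I would exhibit a superharmonic function $s$ on a neighborhood of $\overline{\Omega}$ with $s\equiv+\infty$ on $E$. By separability, cover $E$ by countably many balls $B(\zeta_{i},r_{i})$ with $B(\zeta_{i},r_{i})\cap\partial\Omega\subset E$. The enlarged domain $\Omega_{i}=\Omega\cup B(\zeta_{i},r_{i})$ has boundary contained in $(\partial\Omega\setminus B(\zeta_{i},r_{i}))\cup\partial B(\zeta_{i},r_{i})$ and is therefore a proper extension of $\Omega$ across a portion of $E$. Negligibility of $E$ for $\Omega$ implies that no nonzero harmonic contribution from $\Omega$ passes through $E$, which allows me to construct a Green-type positive superharmonic function $u_{i}$ on $\Omega_{i}$ that blows up at each point of $E\cap B(\zeta_{i},r_{i})$; concretely I would use a reduced function over a well-chosen polar-like trace near $\zeta_{i}$, or the Green potential of a measure concentrated close to $E\cap B(\zeta_{i},r_{i})$. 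Choosing positive coefficients $\lambda_{i}$ so that $\sum\lambda_{i}u_{i}(x_{0})<\infty$ at a fixed $x_{0}\in\Omega$, the classical summation theorem for superharmonic functions yields a superharmonic $s=\sum\lambda_{i}u_{i}$, finite quasi-everywhere, and equal to $+\infty$ at every point of $E$. This certifies that $E$ is polar.

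\textbf{Main obstacle.} Both parts require passing from the pointwise barriers provided by Theorem \ref{2,2} to a global statement (zero harmonic measure in (i), existence of a polar certificate in (ii)). The delicate step in (i) is the uniform decomposition of $T$ so that the barrier argument on each piece is insensitive to the choice of witness $v$; in (ii) the delicate step is constructing the local potentials $u_{i}$ so that they are genuinely superharmonic across $E\cap B(\zeta_{i},r_{i})$---this is where the openness of $E$ in $\partial\Omega$, combined with negligibility, is crucial, since otherwise the sum $s$ could fail to be superharmonic at points of $E$.
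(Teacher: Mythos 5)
The paper itself does not prove Theorem \ref{th2.3}: it quotes part (i) from Armitage--Gardiner, Theorem 7.5.4 (essentially the Kellogg property --- the thin, i.e.\ irregular, boundary points form a negligible set) and part (ii) from their Theorem 6.6.9(i). Both are substantial results, so a from-scratch argument must supply real machinery, and your sketch does not yet do so. For (i), the countable decomposition $T=\bigcup T_{n,k,m}$ is a reasonable first move, but the step ``the scaled barriers force the upper PWB envelope of $\mathbf{1}_{K}$ to have $\limsup\le 0$ at every point of $K$, \ldots\ so the envelope vanishes'' is where the whole proof lives and it does not close as stated. The thinness barrier of Theorem \ref{2,2} points the wrong way for the upper class: after your normalization, $-v$ is superharmonic near $\zeta$ with $\liminf_{x\to\zeta,\,x\in\Omega}(-v)(x)\ge 0$ while $-v(\zeta)\le -1$, whereas to bound $\mu_{x_0}^{\Omega}(K)$ you need a \emph{single} nonnegative superharmonic function on $\Omega$, small at $x_0$, whose boundary $\liminf$ is $\ge 1$ at every point of $K$ simultaneously; a separate barrier at each point does not assemble into one. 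Moreover, even granting $\limsup\le 0$ on $K$, concluding that the bounded harmonic envelope vanishes requires controlling its behaviour at the remaining boundary points, where the only obstructions are again the irregular points --- so the maximum-principle step is circular. The standard proofs pass through reduced functions, the fundamental convergence theorem and capacitability, none of which appear in the sketch.

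For (ii) the circularity is even more direct: a positive superharmonic function on an open set that equals $+\infty$ at every point of a given subset exists \emph{if and only if} that subset is polar, so the instruction ``construct a Green-type positive superharmonic $u_i$ on $\Omega_i$ that blows up at each point of $E\cap B(\zeta_i,r_i)$'' assumes exactly the conclusion, and the hypothesis of negligibility is never put to concrete use (the phrase ``no nonzero harmonic contribution passes through $E$'' does no work in the construction). A non-circular route would be: take $K\subset E$ compact, choose a ball $B$ with $K\subset B$ and $B\cap\partial\Omega\subset E$, and suppose $K$ is not polar; then the regularized reduction $\hat R^{K}_{1}$ in $B$ is a nonzero Green potential $G_B\mu$ with $\mu$ supported on $K$, and one derives a contradiction with $\mu_x^{\Omega}(E)=0$ by applying the minimality/maximum principle to $G_B\mu$ restricted to $\Omega\cap B$. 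Some argument of this type (or an equivalent one via harmonic extension across the negligible relatively open piece of the boundary) is needed; as written, the proposal for both parts restates the difficulty rather than resolving it.
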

 
 See  \cite[Theorem 7.5.4]{ArmGar} for part (i) and   \cite[Theorem 6.6.9-(i)]{ArmGar} for part (ii).

 \section{The General Lemma}

 \begin{lemma}\label{lemma3.1}
Let  $v$ be  a subharmonic function on an open neighborhood of $\overline{\Omega}$. For $\lambda\in\mathbb{R}$, define $E$ to be the set of all $x\in\Omega$ such that $v(x)<\lambda.$ Then  the set 
$$ e:=\lbrace \zeta\in\partial E: v(\zeta) >\lambda\rbrace$$
is  polar. 
 \end{lemma}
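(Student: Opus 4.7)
My plan is to show every point of $e$ is a ``thin'' boundary point of a suitable open set, and then invoke the thinness--negligibility--polarity chain provided by Theorems~\ref{2,2} and~\ref{th2.3}. The starting observation is that for $\zeta\in e$, since $v(x)<\lambda$ on $E$ and $v(\zeta)>\lambda$,
$$\limsup_{\substack{x\to\zeta\\ x\in E}}v(x)\;\leq\;\lambda\;<\;v(\zeta),$$
so Theorem~\ref{2,2} with $v$ itself as the witnessing subharmonic function yields that $E$ is thin at $\zeta$. More flexibly, the same chain of inequalities with any $\mu\in(\lambda,v(\zeta))$ in place of $\lambda$ shows that the open set $D_\mu:=\{x:v(x)<\mu\}$, taken in the neighbourhood of $\overline\Omega$ on which $v$ is defined, contains $E$, has $\zeta\in\partial D_\mu$, and is thin at $\zeta$.

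Next I would decompose $e$ over rational levels: setting $e_q:=\{\zeta\in\partial E:v(\zeta)>q\}$ for $q\in\mathbb{Q}\cap(\lambda,\infty)$, one has $e=\bigcup_q e_q$, so it suffices to prove each $e_q$ polar. Taking $\mu=q$ above, every $\zeta\in e_q$ sits in $\partial D_q$ with $D_q$ thin at $\zeta$. Theorem~\ref{th2.3}(i), applied with $D_q$ in place of $\Omega$, then identifies $e_q$ as a subset of the negligible (for $D_q$) set of thin boundary points of $D_q$.

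To promote negligibility to polarity via Theorem~\ref{th2.3}(ii) one needs $e_q$ to be \emph{relatively open} in $\partial D_q$, and this is the principal technical obstacle: because $v$ is only upper semi-continuous, $\{v>q\}$ is merely $F_\sigma$. I would therefore sub-decompose once more, writing $e_q=\bigcup_{q'>q,\,q'\in\mathbb{Q}}F_{q,q'}$ with $F_{q,q'}:=\{\zeta\in\partial E:v(\zeta)\geq q'\}$, each closed in the bounded set $\overline\Omega$ and hence compact. The strict gap $q'-q>0$ places $F_{q,q'}\subset\{v\geq q'\}$ disjointly from the $G_\delta$-set $\{v=q\}\cap\partial D_q\subset\{v<q'\}$. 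Using upper semi-continuity of $v$ and compactness of $F_{q,q'}$, I would build a Euclidean open set $U_{q,q'}\supset F_{q,q'}$ whose trace on $\partial D_q$ lies inside $\{v>q\}\cap\partial D_q$; such a trace is a relatively open, negligible subset of $\partial D_q$, polar by Theorem~\ref{th2.3}(ii), whence a countable union over $q,q'$ gives $e$ polar. The main difficulty is precisely this construction of $U_{q,q'}$: one must separate the compact $F_{q,q'}$ from the $G_\delta$ accumulation structure of $\{v=q\}\cap\partial D_q$ in the Euclidean topology, and this uses the strict gap $q'-q$ together with the upper semi-continuity of $v$ in an essential way.
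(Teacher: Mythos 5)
Your first two steps track the paper exactly: the inequality $\limsup_{x\to\zeta,\,x\in E}v(x)\leq\lambda<v(\zeta)$ together with Theorem \ref{2,2} gives thinness of $E$ (or of your $D_\mu$) at each point of $e$, and Theorem \ref{th2.3}(i) then gives negligibility. You have also correctly located where the real content of the lemma lies: Theorem \ref{th2.3}(ii) requires a \emph{relatively open} negligible set, and $\{v>q\}$ is only $F_\sigma$ for an upper semi-continuous $v$. The problem is that your proposed repair of this step --- the construction of a Euclidean open set $U_{q,q'}\supset F_{q,q'}$ whose trace on $\partial D_q$ lies in $\{v>q\}$ --- is precisely the step you do not carry out, and the tools you invoke cannot carry it out. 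Upper semi-continuity controls $v$ from \emph{above} on neighborhoods, never from below: a point $\zeta$ with $v(\zeta)\geq q'$ may perfectly well be a limit of points $x_n\in\partial D_q$ with $v(x_n)=q$, since $\limsup_n v(x_n)=q\leq v(\zeta)$ is exactly what upper semi-continuity permits. Likewise, disjointness of the compact set $F_{q,q'}$ from the $G_\delta$ set $\{v=q\}\cap\partial D_q$ yields no open separation, because the latter set need not be closed and may accumulate on the former. So no neighborhood of $F_{q,q'}$ need have the trace you want, and the argument does not close; proving that it nevertheless does would require using subharmonicity in some way you have not indicated.

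For comparison, the paper does not sub-decompose at all. It argues directly that $\gamma:=\{\zeta\in\partial E:v(\zeta)=\lambda\}$ is closed in $\partial E$, so that $e=\partial E\setminus\gamma$ is relatively open, by exploiting the specific structure of $\partial E$ as the boundary of the sublevel set $\{v<\lambda\}$: since $v\geq\lambda$ everywhere on $\partial E$, the restriction $v|_{\partial E}$ cannot dip below the value $\lambda$ it takes on $\gamma$, so it is automatically lower semi-continuous (hence continuous) at each point of $\gamma$. Your route, by contrast, replaces $\partial E$ by the auxiliary boundaries $\partial D_q$ and then needs relative openness of $\{v>q\}$ in $\partial D_q$ --- which is the same difficulty one level up --- and then tries to extract it from upper semi-continuity, which points in the wrong direction. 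To complete your approach you would need a genuinely new input at this stage (for instance, an argument of the paper's type on $\partial D_q$, where $v\geq q$ throughout), not a compactness-plus-separation argument.
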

 
 \begin{proof}
  We closely follow the proof of Lemma 3.2. in author's paper \cite{Kal}. The set $E$ is open, since $v$ is  upper semi-continuous.  We start by proving that $ e $  is negligible for $ E $, i.e., its harmonic measure is zero:
\begin{equation}
\mu_{x}^{E}(e)=0 \label{3.2}
\end{equation} 
for all $ x\in e $.  To do so, it suffices to show that $E$ is thin at each point  of $ e$, according to Theorem \ref{th2.3}-(i). Let $ \zeta\in e $. It follows from the definitions of $E$ and $e$ that
$$ \limsup_{\substack{x\rightarrow\zeta\\(x\in E)}}v(x)\leq \lambda $$
whereas $$ v(\zeta)>\lambda.$$ Thus $ E$ is thin at $ \zeta $, according to  Theorem \ref{2,2} . By Theorem \ref{th2.3}-(i)  the set $ e $ is negligible  and  (\ref{3.2}) follows.

Next we proceed to prove that the set
$$\gamma:=\lbrace \zeta\in \partial E: v(\zeta)=\lambda\rbrace$$
is closed in $\partial E$. To do so it is sufficient to show that the restriction of $v$ to $\gamma$, that we steel write $v$, is continuous in the topological subspace $\partial E$. Since $v$ is already upper semi-continuous, we need to show that it is also lower semi-continuous in the mentioned topology. Let $\zeta\in \gamma$ and suppose that $v$ is not lower semi-continuous at $\zeta$. Then, there exits $\varepsilon>0$ such that for every open neighborhood $V$ of $\zeta$ there exists $x\in\partial E \cap V$ satisfying 
$$\lambda\leq v(x)\leq v(\zeta)-\varepsilon=\lambda-\varepsilon<\lambda,$$
which is absurd. Here, the first inequality is due to the fact that since $E$ is open, the boundary of $\omega$ is included to the set 
\begin{equation}
\lbrace \xi:v(\xi)\geq\lambda\rbrace.\label{eq3.4}
\end{equation}
Thus the restriction of $v$ to $\gamma$ is continuous and $\gamma$ is closed in $\partial E$.

Finally,  we have $\partial E=\gamma\cup e$ and it follows form the last paragraph that $ e=\partial E\setminus\gamma $ is open in $ \partial E $. Thus $ e $ is polar by Theorem \ref{th2.3}-(ii) and the lemma follows.

 \end{proof}

 \section{First Application :  Continuity of Subharmonic Functions}
 
  \begin{theorem}\label{th4.4}
Let  $u$ be a subharmonic function on a neighborhood of $\overline{\Omega}$.  Then the set of discontinuity of $u$ is polar, i.e. there exists a polar set $\mathcal{D}$ in $ \Omega $ such that the restriction of $u$ to $ \Omega\setminus\mathcal{D} $ is continuous.
\end{theorem}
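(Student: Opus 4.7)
The plan is to exploit Lemma \ref{lemma3.1} on the family of sublevel sets $\{u<\lambda\}$ for rational $\lambda$, and thereby cover the discontinuity set by countably many polar sets.

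First I would observe that since $u$ is subharmonic, it is automatically upper semi-continuous, so a point $x\in\Omega$ belongs to the discontinuity set $\mathcal{D}$ if and only if $u$ fails to be lower semi-continuous at $x$, i.e., if and only if $\liminf_{y\to x} u(y) < u(x)$. In particular, $u(x)>-\infty$ on $\mathcal{D}$, since otherwise the reverse inequality would hold trivially; so on $\mathcal{D}$ the value $u(x)$ is a finite real number.

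Next, for each rational $\lambda$ I would set $E_\lambda=\{x\in\Omega : u(x)<\lambda\}$, which is open by upper semi-continuity, and apply Lemma \ref{lemma3.1} to conclude that $e_\lambda:=\{\zeta\in\partial E_\lambda : u(\zeta)>\lambda\}$ is polar. The core of the argument is then the inclusion $\mathcal{D}\subseteq \bigcup_{\lambda\in\mathbb{Q}} e_\lambda$: given $x\in\mathcal{D}$, I would pick a rational $\lambda$ strictly between $\liminf_{y\to x} u(y)$ and $u(x)$, extract a sequence $y_n\to x$ with $u(y_n)<\lambda$ (the $y_n$ lie in $E_\lambda$ eventually, since $\Omega$ is open around $x$), and thereby obtain $x\in\overline{E_\lambda}\setminus E_\lambda=\partial E_\lambda$ together with $u(x)>\lambda$, i.e., $x\in e_\lambda$.

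Finally, since a countable union of polar sets is polar (a standard result in classical potential theory), the inclusion above yields that $\mathcal{D}$ is polar, which is the conclusion of the theorem. I do not foresee a genuine obstacle here: the proof is textbook-style bookkeeping layered on top of Lemma \ref{lemma3.1}. The only small subtlety is that the sublevel threshold must be chosen strictly between a liminf and a strict upper bound on it, which is why the case $u(x)=-\infty$ has to be ruled out at the start; this is automatic from the characterization of discontinuity used above.
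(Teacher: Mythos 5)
Your proof is correct and follows essentially the same route as the paper: cover the discontinuity set by the polar sets $e_\lambda$ from Lemma~\ref{lemma3.1} with $\lambda$ rational chosen strictly between $\liminf_{y\to x}u(y)$ and $u(x)$, then use countable stability of polarity. The only cosmetic difference is that the paper splits off the set $\{u=-\infty\}$ and invokes its polarity separately, whereas you observe directly that discontinuity of an upper semi-continuous function at $x$ already forces $u(x)>-\infty$; both dispositions are fine.
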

\begin{proof}
Let $\mathcal{D}$ be the set of discontinuity of $u$. Let $\mathcal{D}_{1}$ be the subset of $\mathcal{D}$ where $u$ is real-valued, and $\mathcal{D}_{2}$ the subset of $\mathcal{D}$ where $u=-\infty$.  By definition $\mathcal{D}_{2}$ is a polar set, and we just need to prove that so is $\mathcal{D}_{1}$.

We may assume $\mathcal{D}$ non empty; otherwise there is nothing to prove. Take $x\in\mathcal{D}_{1}$. Since $u$ is subharmonic everywhere, that means $u$ is not lower semi-continuous at $x$. Thus there exists $\varepsilon>0$ such that every open neighborhood of $x$ has a point where the value of $u$ is less than or equal to $u(x)-\varepsilon$.  Let  $\lbrace x_{n}\rbrace$ be a sequence of elements of $\Omega\setminus\mathcal{D}_{2}$  that approaches $x$ as $n\rightarrow+\infty$ and such that 
\begin{equation}
u(x_{n})\leq u(x)-\varepsilon \label{315}
\end{equation} 
for all $n$.

Let $q$ be a rational number such that
\begin{equation}
u(x)-\varepsilon <q<u(x).\label{316}
\end{equation} 
We define $E_{q}$ to be the set of all $\zeta\in \Omega$ such that $u(\zeta) < q$. Thus it follows from the lemma that the set
$$e_{q}:=\lbrace \zeta\in \partial E_{q}:u(\zeta)>q\rbrace$$
is a polar set. Now, it it easy to show that the point $x$, defined in the  last paragraph as an element of $\mathcal{D}_{1}$,  belongs to $e_{j} $. To see this, first we notice that
by (\ref{315}) and (\ref{316}), we have $u(x_{n} )<q$ and so  $\lbrace x_{n}\rbrace$ is a sequence of elements of $E_{q}$. Thus its limit $x$ belongs  to the closure of $ E $, which means either to $\lbrace \zeta\in \overline{ E_{q}}: u(\zeta)\leq q\rbrace$ or to $e_{q}$. But the first option is to be ruled out by (\ref{316}); we conclude that $x$ is in the polar set $e_{j}$.

 Summing up, we have proved so far that each element $x$ in $ \mathcal{D}_{1}$ is contained in a polar set $e_{q}$. Thus 
 $$\mathcal{D}_{1}\subset \bigcup_{q\in\mathbb{Q}}e_{q},$$
where $\mathbb{Q}$ designates the set of rational numbers,  and so $\mathcal{D}_{1}$  is polar. On the other hand, the set $\mathcal{D}_{2}$ is  polar by definition. Thus the set $\mathcal{D}$ of discontinuity of $u$ is a polar set as a union of  polar sets.

\end{proof}

\subsection{The Converse Problem}

\begin{theorem}\label{th4.5}
Let  $E$ be a  subset of $ \mathbb{R}^{N}$ $(N\geq2)$. The following are equivalent.
\begin{itemize}
	\item[(i)] $E$ is  polar,
	\item[(ii)]There is a  function $u$ subharmonic on  a neighborhood of $E$ such that 
	$$E\subset \lbrace x: w(x)=-\infty\rbrace,$$
	\item[(iii)]There is a function $u$ subharmonic on a  neighborhood of $E$ such that $E$ is  the set of discontinuity of $u$.
\end{itemize}
\end{theorem}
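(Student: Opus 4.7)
I would prove the equivalence by the cycle $(\mathrm{iii})\Rightarrow(\mathrm{i})\Rightarrow(\mathrm{ii})\Rightarrow(\mathrm{iii})$.

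First, $(\mathrm{iii})\Rightarrow(\mathrm{i})$ is immediate from Theorem \ref{th4.4}: the discontinuity set of a subharmonic function is polar, and any subset of a polar set is polar.

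Next, $(\mathrm{i})\Rightarrow(\mathrm{ii})$ is a classical characterization of polarity. In the standard treatments of potential theory (see, e.g., Armitage--Gardiner), $E$ being polar is taken to mean exactly that there is a subharmonic function on a neighborhood of $E$ that is identically $-\infty$ on $E$; in many expositions this is the very definition, so the implication is essentially a matter of unpacking terminology.

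For $(\mathrm{ii})\Rightarrow(\mathrm{iii})$, I would take the $u$ produced by (ii) and note that every point of $E$ satisfies $u(x)=-\infty$. Under the convention used in the proof of Theorem \ref{th4.4} — where the subset $\mathcal{D}_{2}\subset\{u=-\infty\}$ is regarded as part of the discontinuity set and is polar because $\{u=-\infty\}$ is — each such point is a point of discontinuity of $u$, so $E$ is contained in the discontinuity set of $u$, matching the phrasing of the introduction (``discontinuous at each point of this set''). To enforce the stronger literal reading $E=\mathcal{D}(u)$, I would refine the construction using Evans' theorem: every polar set lies in some $G_{\delta}$ polar set $F$, and Evans' theorem yields a subharmonic function whose $-\infty$-locus is exactly $F$ and which is continuous off $F$; trimming and combining such functions for $E$ itself gives a $u$ with no additional finite-valued discontinuities.

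The principal obstacle is precisely this refinement: one has to rule out spurious finite-valued discontinuities of $u$ in $\Omega\setminus E$, which is handled by the continuity-off-the-singular-set part of Evans' theorem. The other two implications are essentially free — one from Theorem \ref{th4.4}, the other from the definitional characterization of polarity — so the substance of the proof concentrates in the construction guaranteed by (ii).
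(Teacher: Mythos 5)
Your routing of the two easy implications matches the paper: (iii) $\Rightarrow$ (i) is exactly the paper's appeal to Theorem \ref{th4.4}, and (i) $\Leftrightarrow$ (ii) is dismissed as classical in both. The substance therefore sits entirely in producing a $u$ whose set of discontinuity \emph{is} $E$, and there your argument has a genuine gap --- though, to be fair, so does the paper's own proof of that step: its candidate ($u=1$ on $\Omega\setminus E$, $u=-\infty$ on $E$) is not upper semi-continuous at any point of a nonempty $E$, since a polar set has empty interior, so every point of $E$ is a limit of points where $u=1$; hence that $u$ is not subharmonic.

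The gap in your version is twofold. First, the premise that every point with $u(x)=-\infty$ is a discontinuity point is not available: under the standard extended-real notion of continuity, an upper semi-continuous function is \emph{automatically continuous} at every point where it equals $-\infty$, because for each $M$ the set $\{u<-M\}$ is an open neighborhood of such a point, so $u(y)\to-\infty$ there. (The paper's $\mathcal{D}_{2}$ is defined as $\mathcal{D}\cap\{u=-\infty\}$, not as all of $\{u=-\infty\}$, so it does not license the convention you attribute to it.) Consequently the Evans-type function you invoke --- equal to $-\infty$ exactly on a $G_{\delta}$ polar set $F\supseteq E$ and continuous off $F$ --- has \emph{empty} discontinuity set, not $F$, let alone $E$. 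Second, ``trimming and combining'' cannot in general achieve $\mathcal{D}(u)=E$: the continuity set of any function is a $G_{\delta}$, so its discontinuity set is $F_{\sigma}$, while a polar set need not be $F_{\sigma}$ (take a non-Borel subset of a compact set of zero capacity; subsets of polar sets are polar). So the literal implication (i) $\Rightarrow$ (iii) cannot be proved for arbitrary $E$; a correct argument must either weaken (iii) to containment, as in the paper's introduction, and then arrange $u$ to be \emph{finite} but not lower semi-continuous at each point of $E$, or restrict the class of admissible sets $E$. As written, your proposal does not supply either repair.
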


\begin{proof}
	The equivalence between (i) and (ii) is well known. For (i) $\Rightarrow$ (iii) just set 
	\begin{displaymath}
	u(x) = \left\{
	\begin{array}{lr}
	1 & \text{if } x \in \Omega\setminus E\\
	-\infty &  \text{if } x\in E
	\end{array},
	\right.
	\end{displaymath}
	 and its converse follows from Lemma \ref{lemma3.1}.

\end{proof}

\begin{corollary}
	Let $u$ be a subharmonic function on $\Omega$. If the set of discontinuity of $u$ is not polar, then $u\equiv-\infty$. 
\end{corollary}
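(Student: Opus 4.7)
The plan is to reduce the corollary to Theorem \ref{th4.4} by a localization argument and then read the result as a contrapositive. Theorem \ref{th4.4} assumes $u$ is subharmonic on a neighborhood of $\overline{\Omega}$, whereas the corollary only assumes subharmonicity on $\Omega$ itself, so a direct appeal is not quite available.

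First, I would write $\Omega=\bigcup_{n\ge 1}B_{n}$, where each $B_{n}$ is an open ball with $\overline{B_{n}}\subset\Omega$. Such a countable cover exists because $\Omega$ is an open subset of the second countable space $\mathbb{R}^{N}$, so every point of $\Omega$ is contained in a ball whose closure sits inside $\Omega$, and one can extract a countable subcover. On each $B_{n}$, the function $u$ is subharmonic on $\Omega$, which is an open neighborhood of $\overline{B_{n}}$; hence Theorem \ref{th4.4}, applied with $B_{n}$ in place of $\Omega$, produces a polar set $\mathcal{D}_{n}\subset B_{n}$ outside of which $u|_{B_{n}}$ is continuous.

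Next, I would note that the set of discontinuity of $u$ on $\Omega$ is contained in $\bigcup_{n}\mathcal{D}_{n}$, which is polar as a countable union of polar sets. Reading this contrapositively yields the corollary: if the set of discontinuity of $u$ is not polar, then the hypothesis that $u$ is subharmonic on $\Omega$ must fail. Since upper semi-continuity and the sub-mean inequality pass trivially to the identically $-\infty$ function, the only way subharmonicity can fail for such a $u$ is through the nondegeneracy clause of the definition, which forces $u\equiv-\infty$.

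There is no substantive obstacle here: the heavy lifting has already been done in Theorem \ref{th4.4}, and the corollary is a routine localization. The only point to be careful about is the choice of cover — each ball must have closure compactly contained in $\Omega$, so that $u$ is subharmonic on a neighborhood of $\overline{B_{n}}$ and Theorem \ref{th4.4} is genuinely applicable to the pair $(B_{n},u|_{B_{n}})$. Countable additivity of polarity then closes the argument.
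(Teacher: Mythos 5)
Your argument is correct and matches the paper's intent: the corollary is stated without proof as an immediate consequence of Theorem \ref{th4.4}, and your exhaustion of $\Omega$ by countably many balls with closures in $\Omega$ is exactly the localization needed to pass from ``subharmonic on a neighborhood of $\overline{\Omega}$'' to ``subharmonic on $\Omega$'', with countable subadditivity of polarity and the contrapositive reading finishing the job. The only pedantic point is that on a ball where $u\equiv-\infty$ (possible on a whole component of $\Omega$ under the paper's definition of subharmonicity) Theorem \ref{th4.4} does not literally apply; but there $u$ is constant and contributes nothing to the discontinuity set, so your conclusion stands.
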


\begin{remark}
The set of discontinuity of a subharmonic function may very well by everywhere dense. For a function subharmonic on the unit ball $B$ of $\mathbb{R}^{N}$ that is discontinuous at each point of a dense subset of $B$, see 
 Armitage and  Gardiner \cite[Example 3.3.2]{ArmGar}.
\end{remark}

\section{Second Application: A converse for extended maximum (or Phragm\`{e}n-Lindel\"{o}f) Principle }

Recall that the extended maximum (or Phragm\`{e}n-Lindel\"{o}f)  principle provides a "negligible" set for the maximum principle of subharmonic functions. More exactly, suppose $D$ is a domain of $\mathbb{R}^{N}$ and $v$  subharmoic and bounded above on $D$ such that for some $M$ and a polar set $E\subset\partial D$ we have
$$\limsup_{\substack{x\rightarrow\zeta\\(x\in D)}}v(x) \leq M$$
for all $\zeta\in\partial D\setminus E$. Then either $u<M$ on $D$, or $u\equiv M$ on $D$ (for a more general statement and the proof see Hayman and Kennedy \cite[Theorem 5.16]{Hayman Kennedy})

The following is a sort of "converse" for the above result. It follows immediately form Lemma 3.1.

\begin{theorem}
Let $u$ be  a  subharmonic function on a neighborhood of $ \overline{\Omega}$. Suppose that there exists a set $ E $ in the boundary of $ \Omega $ such that for all $ \zeta\in\partial \Omega\setminus E $,
$$ u(\zeta)\leq M, $$
and for all $ x\in\Omega $
$$ u(x)<M. $$
Then the set 
$$ e:=\lbrace \zeta\in E:u(\zeta)>M\rbrace $$
is polar.
\end{theorem}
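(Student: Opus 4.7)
The plan is to apply Lemma \ref{lemma3.1} directly, with $v = u$ and $\lambda = M$. To avoid a notation clash (the letter $E$ plays two different roles: a boundary subset in the statement of the theorem, and an interior sublevel set in the lemma), I would temporarily write $E' := \{x \in \Omega : u(x) < M\}$ for the sublevel set appearing in the lemma.

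The key observation is that the hypothesis $u(x) < M$ for all $x \in \Omega$ forces $E' = \Omega$, and therefore $\partial E' = \partial \Omega$. Lemma \ref{lemma3.1} then applies verbatim and yields that
$$\{\zeta \in \partial \Omega : u(\zeta) > M\}$$
is polar.

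Next I would exploit the remaining hypothesis: since $u(\zeta) \leq M$ on $\partial \Omega \setminus E$, any point $\zeta \in \partial \Omega$ with $u(\zeta) > M$ must in fact belong to $E$. Combined with $E \subset \partial \Omega$, this yields the inclusion
$$e = \{\zeta \in E : u(\zeta) > M\} \subset \{\zeta \in \partial \Omega : u(\zeta) > M\}.$$
Since any subset of a polar set is polar, it follows that $e$ is polar.

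There is no real obstacle here beyond disentangling the notation: once one recognises that the strict inequality $u < M$ on $\Omega$ collapses the interior sublevel set to $\Omega$ itself, the theorem is an immediate specialization of Lemma \ref{lemma3.1} together with the monotonicity of the polar-set class, which is precisely why the author describes it as following immediately from the general lemma.
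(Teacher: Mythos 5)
Your proposal is correct and is exactly the argument the paper intends: the paper offers no proof beyond the remark that the theorem ``follows immediately from Lemma \ref{lemma3.1}'', and your specialization ($v=u$, $\lambda=M$, sublevel set equal to all of $\Omega$ so that its boundary is $\partial\Omega$, then monotonicity of polar sets) is the natural filling-in of that claim. One minor remark: the inclusion $e\subset\{\zeta\in\partial\Omega: u(\zeta)>M\}$ already follows from $E\subset\partial\Omega$ alone, so the hypothesis $u\leq M$ on $\partial\Omega\setminus E$ only serves (as you in effect show) to upgrade that inclusion to an equality.
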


 \vspace{10 mm}

\end{document}